\newcommand*{\N}{\mathds{N}}
\newcommand*{\R}{\mathds{R}}
\newcommand{\eps}{\varepsilon}
\newcommand{\reg}{\mathcal R}
\newcommand{\fun}{\mathcal F}
\newcommand{\M}{\mathcal M}
\newcommand{\gapset}{\mathcal B}
\newcommand{\Ko}{\mathbf{K}}
\newcommand{\Ao}{\mathbf A}
\newcommand{\X}{\mathbb X}
\newcommand{\Y}{\mathbb Y}
\newcommand{\tik}{\mathcal{H}}
\newcommand{\al}{\alpha}
\newcommand{\signal}{x}
\newcommand{\data}{y}
\newcommand{\datadelta}{\data^\delta}
\newcommand{\gap}{\Delta}
\newcommand{\bregman}{D}
\newcommand{\sbregman}{D^{\rm sym}}
\DeclarePairedDelimiter{\abs}{\lvert}{\rvert}
\DeclarePairedDelimiter{\norm}{\lVert}{\rVert}
\DeclarePairedDelimiter{\innerprod}{\langle}{\rangle}
\DeclarePairedDelimiter{\set}{\{}{\}}
\DeclareMathOperator{\supp}{supp}
\DeclareMathOperator{\ran}{ran}
\DeclareMathOperator*{\argmin}{arg\,min}
\newtheorem{theorem}{Theorem}
\newtheorem{corollary}[theorem]{Corollary}
\theoremstyle{definition}
\newtheorem{proposition}[theorem]{Proposition}
\newtheorem{example}[theorem]{Example}
\newtheorem{remark}[theorem]{Remark}
\newtheorem{definition}[theorem]{Definiton}
\newtheorem{cond}{Condition}
\colorlet{lred}{red!40}
\colorlet{lgreen}{green!40}
\colorlet{lblue}{blue!40}
\definecolor{bananamania}{rgb}{0.98, 0.91, 0.71}
\numberwithin{equation}{section}
\numberwithin{table}{section}
\numberwithin{figure}{section}
\numberwithin{theorem}{section}
\title{Convergence rates for critical point regularization}
\author{Daniel Obmann}
\affil{Department of Mathematics, University of Innsbruck\authorcr
Technikerstrasse 13, 6020 Innsbruck, Austria\authorcr
E-mail:  \texttt{daniel.obmann@uibk.ac.at}
 }
\author{Markus Haltmeier}
\affil{Department of Mathematics, University of Innsbruck\authorcr
Technikerstrasse 13, 6020 Innsbruck, Austria\authorcr
E-mail:  \texttt{markus.haltmeier@uibk.ac.at}
 }
\date{February 17, 2023}
\begin{document}

\maketitle

\begin{abstract}
Tikhonov regularization involves minimizing the combination of a data discrepancy term and a regularizing term, and is the standard approach for solving inverse problems. The use of non-convex regularizers, such as those defined by trained neural networks, has been shown to be effective in many cases. However, finding global minimizers in non-convex situations can be challenging, making existing theory inapplicable. A recent development in regularization theory relaxes this requirement by providing convergence based on critical points instead of strict minimizers. This paper investigates convergence rates for the regularization with critical points using Bregman distances. Furthermore, we show that when implementing near-minimization through an iterative algorithm, a finite  number of iterations is sufficient without affecting convergence rates.
\medskip

\noindent \textbf{Keywords:} Inverse problems, regularization, critical points, convergence rates, variational methods
\end{abstract}

\section{Introduction}

Many practical applications such as in medical imaging or remote sensing, can be represented by an equation of the form $    \Ao \signal + z = \datadelta$, where $\Ao \colon \X \to \Y$ describes the system, $z$ is some noise with $\norm{z} \leq \delta$, and $\signal$ is the signal of interest to be recovered. However, such  problems are often ill-posed, making direct inversion impossible or unstable.
To construct stable solutions, variational regularization methods are often used which minimize the  combination of a data-fidelity term and a regularization term. Such methods are well-established and provide stable recovery under reasonable assumptions \cite{tikhonov1963solution,groetsch1984theory,EngHanNeu96,ramlau2006tikhonov,rieder1997wavelet}. In addition, convergence rates can be derived, giving  quantitative  estimates of how close the regularized solution  is to the true solution \cite{scherzer2009variational, albani2016optimal, grasmair2010generalized, burger2004convergence,lorenz2008convergence}.

However, the theory of variational regularization assumes knowledge of exact minimizers, which can be difficult to obtain in the case of non-convex regularizers. In contrast, the critical point regularization introduced in \cite{obmann2022convergence} discards the requirements  of having access to global minimizers. Instead,  it uses critical points relative to a certain tolerance function $\phi \colon \X \to [0, \infty)$.
It has been shown that this relaxed approach leads to a stable and convergent regularization method.

In this paper, we build upon  \cite{obmann2022convergence} and derive convergence rates for critical point regularization using the absolute symmetric Bregman distance. With the additional assumption of having access to near-minimizers, we also establish convergence rates in the Bregman distance. Furthermore, we demonstrate that having access to near-minimizers is often a reasonable assumption, for instance, when an iterative minimization algorithm is available. This gives a new perspective on variational and, particularly, convex regularization methods and shows that inexactness in the minimization process for the construction of  critical points does not reduce the convergence rate.

\paragraph{Outline:}
The rest of the paper is organized as follows. Section~\ref{sec:preliminaries} gives background and an overview of the most relevant results of \cite{obmann2022convergence}. Section~\ref{sec:rates} presents   convergence rates in the absolute symmetric Bregman-distance for exact and inexact critical points. In Section~\ref{sec:gap} we consider critical points that are close to  global minimizers of the Tikhonov functional and derive additional convergence rates. Section~\ref{sec:numerics} presents a simple numerical example showing that inexact minimization may lead to significantly slower rates. The paper finishes with a short conclusion presented in   Section~\ref{sec:conclusion}.

\section{Background} \label{sec:preliminaries}

Throughout this paper, $\X$ and $\Y$ denote Hilbert spaces and $\Ao \colon \X \to \Y$ is a linear and bounded operator. We consider  the   Tikhonov functional 
\begin{equation} \label{eq:tik}
\tik_{\al, \data^\delta} = \frac{1}{2} \norm{\Ao(\cdot)- \data^\delta}^2 + \al \reg \,,
\end{equation}  
where   $\norm{\Ao \signal - \data}^2/2$ is the  data-fidelity term and $\reg$ a regularization term.

\subsection{Notation}

The following concepts are  introduced in \cite{obmann2022convergence}.

\begin{definition}[Relative sub-differentiability] \label{def:errorconvex}
Let $\reg \colon \X \to \R$  and $\phi \colon \X \to [0, \infty)$.   Then $\xi \in \X $ is called $\phi$-relative subgradient of $\reg$ at  $\signal_0 \in \X$  if
\begin{linenomath*}
\begin{equation}
 \forall \signal \in \X \colon \quad \reg(\signal_0) + \innerprod{\xi, \signal - \signal_0} \leq \reg(\signal) + \phi(\signal) \,.
\end{equation}
\end{linenomath*}
The set of all $\phi$-relative subgradients  at $\signal_0$ is denoted by  $\partial_\phi \reg(\signal_0) $ and called $\phi$-relative sub-differential of $\reg$. Functional $\reg$  is called $\phi$-relative subdifferentiable or relative subdifferentiable with bound $\phi$ if $\partial_\phi \reg(\signal)  \neq \emptyset$ for all $\signal \in \X$.
\end{definition}

\begin{definition}[Relative critical points] \label{def:criticalpoint}
 Let $\reg \colon \X \to \R$  and $\phi \colon \X \to [0, \infty)$.  We call $\signal_0 \in \X$ a $\phi$-critical point of $\reg$ or relative critical bound with bound $\phi$ if  $0 \in \partial_\phi \fun(\signal_0)$. 
\end{definition}

From the definition it follows that  $\signal_0 \in \X$ is a $\phi$-critical point if and only if  $\reg(\signal_0)  \leq \reg(\signal) + \phi(\signal)$ for all $\signal \in \X$.  

\begin{definition}[Gradient selection]
Let $\reg \colon \X \to \R$ be a relatively subdifferentiable functional. Then any function $G  \colon \X \to \X$ with $G(\signal) \in \partial_\phi \reg (\signal)$ for all $\signal \in \X$ is called gradient selection for $\reg$.
\end{definition}

Important examples of gradient selections include $G(\signal) \in \partial_0 \reg(\signal)$ if $\reg$ is convex and subdifferentiable and  $G(\signal) = \reg'(\signal)$ if $\reg$ is differentiable and  $\phi$ is such that $\reg'(\signal) \in \partial_\phi \reg(\signal)$.

\begin{definition}[Bregman-distance]
The Bregman-distance  with gradient selection $G$ of a relatively subdifferentiable $\reg$ is defined by
\begin{linenomath*}
\begin{align*}
    \bregman_G \colon \X \times \X &\to \R \colon\\
    (\signal, \signal_0) &\mapsto  \reg(\signal) -
    \reg(\signal_0) - \innerprod{G(\signal_0),\signal-  \signal_0} \,.
 \end{align*}
\end{linenomath*}
\end{definition}

If the Bregman distance is used with fixed $\signal_0$ and  $\xi = G(\signal_0)$  we write $\bregman_\xi (\signal, \signal_0)  =  \bregman_G (\signal, \signal_0) $ as  it only depends on the gradient selection at $\signal_0$.  This  is different for the symmetric Bregman distance defined next, that depends on the gradient selection  at both input elements.

\begin{definition}[Symmetric Bregman-distance]
The symmetric Bregman-distance  with gradient selection $G$ of a relatively subdifferentiable $\reg$ is defined by
\begin{linenomath*}
\begin{align*}
    \sbregman_G \colon \X \times \X &\to \R \colon\\
    (\signal, \signal_0) &\mapsto \innerprod{G(\signal) - G(\signal_0),  \signal - \signal_0} \,.
 \end{align*}
\end{linenomath*}
\end{definition}

The symmetric Bregman-distance  is actually symmetric and satisfies  $\sbregman_G(\signal, \signal_0) = \bregman_G(\signal, \signal_0) + \bregman_G(\signal_0,\signal)$. If $\reg$ is convex, then  $\bregman_G$ is non-negative with $\bregman_G(\signal, \signal_0) \leq \sbregman_G(\signal, \signal_0)$ which in particular shows that  the symmetric Bregman-distance in this case is an upper bound for the Bregman-distance. In the non-convex case, both Bregman  distances may be negative and we thus derive convergence rates for the absolute  values of it.

Throughout we write   $\al(\delta) \asymp \delta$ if $C_1 \delta \leq \al(\delta) \leq C_2 \delta$ as $\delta \to 0$ for constants $C_1, C_2 > 0$.

\subsection{Critical point regularization} 

The rates analysis use the  following assumptions on the regularization functional $\reg$.

\begin{cond}[Critical point regularization] \label{cond:conv} \hfill
\begin{enumerate}[label=(A\arabic*), leftmargin=2em, topsep=0.5em, itemsep=0em]
\item \label{cond:conv1} $\reg$ is weakly lower semicontinuous
\item \label{cond:conv2} $\reg$ is $\phi$-relatively  subdifferentiable
\item $\forall \al \,  \forall \data^\delta \colon \frac{1}{2} \norm{\Ao(\cdot)- \data^\delta}^2 + \al \reg$ is coercive. \label{cond:conv3}
\end{enumerate}
\end{cond}

Critical point regularization then consists  in finding $(\al \phi)$-critical points of the Tikhonov functional  $\tik_{\al, \data^\delta} $.  In particular, any $\signal_\al^\delta$  is such a regularized solution provided that
 \begin{linenomath*}
\begin{equation*}
 	0 \in \Ao^*(\Ao \signal_\al^\delta - \data^\delta) + \al \partial_\phi \reg(\signal_\al^\delta) \,.
\end{equation*}
\end{linenomath*}
The analysis  of  \cite{obmann2022convergence} implies the following.

\begin{theorem}[Critical point regularization]
Let $\data \in \ran(\Ao)$, $\data^\delta \in \Y$, $\al > 0$ and let Condition~\ref{cond:conv} be satisfied.
Then the following hold
\begin{enumerate}[topsep=0.5em, itemsep=0em,label= (\arabic*)]
    \item Existence: $\tik_{\al, \data^\delta}$ has a $\phi$-critical  point.
    \item Stability: Let $(\data_k)_k \in \Y^\N$ converge to $\data^\delta$ and let $(\signal_k)_k \in \X^\N$ with $0 \in \Ao^*(\Ao \signal_k - \data_k) + \al \partial_\phi \reg(\signal_k)$.
     \begin{itemize}[topsep=0.5em, itemsep=0em]
     \item  $(\signal_k)_k$ has a weakly convergent subsequence
     \item  Weak cluster points of $(\signal_k)_k$  are $(\al\phi)$-critical points of $\tik_{\al, \data^\delta}$.
     \end{itemize}
    \item Convergence: Let $(\data_k)_k \in Y^\N$ satisfy $\norm{\data - \data_k} \leq \delta_k$, let $ \delta_k, \al_k, \delta_k^2 / \al_k \to  0$ and let $(\signal_k)_k \in \X^\N$  with  $0 \in \Ao^*(\Ao \signal_k - \data_k) + \al_k \partial_\phi \reg(\signal_k)$.
    \begin{itemize}[topsep=0.5em, itemsep=0em]
    \item  $(\signal_k)_k$ has a weakly convergent subsequence and any weak cluster point  $\signal^\ddag$ of $(\signal_k)_k$ is a solution of $\Ao \signal = \data$ with  $ \reg(\signal^\ddag) \leq \inf \set{ \reg(\signal) + \phi(\signal) \mid \Ao \signal = \data }$.
      \item If the solution $\signal^\ddag$ of $\Ao \signal = \data$ is unique, then $(\signal_k)_k   \rightharpoonup  \signal^\ddag$ as $k \to \infty$.
       \item Any cluster point $\xi$ of  $\xi_k \in \partial_\phi \reg(\signal_k)$  satisfies  $\xi \in \ker(\Ao) \cap  \partial_\phi \reg(\signal^\ddag)$.
\end{itemize}
\end{enumerate}
\end{theorem}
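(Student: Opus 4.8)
The plan is to follow the classical pattern of variational regularization theory, in the spirit of \cite{obmann2022convergence}, the common tool being the following elementary sum rule. Since the data term $\frac12\norm{\Ao(\cdot)-\datadelta}^2$ is convex and differentiable with derivative $\Ao^*(\Ao(\cdot)-\datadelta)$, adding its gradient inequality at a point $\signal_0$ to $\al$ times the defining inequality of an element $\eta\in\partial_\phi\reg(\signal_0)$ shows that $\Ao^*(\Ao\signal_0-\datadelta)+\al\eta\in\partial_{\al\phi}\tik_{\al,\datadelta}(\signal_0)$. Consequently, every regularized solution $\signal_\al^\delta$ of $0\in\Ao^*(\Ao\signal_\al^\delta-\datadelta)+\al\,\partial_\phi\reg(\signal_\al^\delta)$ is an $(\al\phi)$--critical point of $\tik_{\al,\datadelta}$ and hence satisfies $\tik_{\al,\datadelta}(\signal_\al^\delta)\le\tik_{\al,\datadelta}(\signal)+\al\phi(\signal)$ for all $\signal\in\X$ (and analogously for perturbed data). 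This inequality is the workhorse for all three assertions.

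\textbf{Existence and stability.} By \ref{cond:conv1} and continuity of the data term, $\tik_{\al,\datadelta}$ is weakly lower semicontinuous, and by \ref{cond:conv3} it is coercive, so the direct method yields a global minimizer, which is a fortiori a $\phi$--critical point since $\phi\ge 0$. For stability I would evaluate the critical--point inequality for $\tik_{\al,\data_k}$ at a fixed point and use boundedness of $(\data_k)$ to obtain a uniform bound on $\tik_{\al,\data_k}(\signal_k)$, whence $(\signal_k)$ is bounded by \ref{cond:conv3}; passing to a subsequence with $\signal_k\rightharpoonup\signal^*$ and taking $\liminf$ in $\tik_{\al,\data_k}(\signal_k)\le\tik_{\al,\data_k}(\signal)+\al\phi(\signal)$ --- using weak lower semicontinuity of $\reg$ and weak continuity of $\Ao$ on the left-hand side and $\data_k\to\datadelta$ on the right-hand side --- gives $\tik_{\al,\datadelta}(\signal^*)\le\tik_{\al,\datadelta}(\signal)+\al\phi(\signal)$, i.e.\ $\signal^*$ is an $(\al\phi)$--critical point of $\tik_{\al,\datadelta}$.

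\textbf{Convergence.} Fix $\hat{\signal}$ with $\Ao\hat{\signal}=\data$ and insert it into the critical--point inequality for $\tik_{\al_k,\data_k}$; using $\norm{\data-\data_k}\le\delta_k$ this rearranges to the two a priori estimates $\tfrac12\norm{\Ao\signal_k-\data_k}^2\le\tfrac12\delta_k^2+\al_k\bigl(\reg(\hat{\signal})+\phi(\hat{\signal})-\reg(\signal_k)\bigr)$ and $\reg(\signal_k)\le\tfrac{\delta_k^2}{2\al_k}+\reg(\hat{\signal})+\phi(\hat{\signal})$. Since $\delta_k^2/\al_k\to 0$, the second estimate bounds $\reg(\signal_k)$ from above; together with the bound on $\tik_{\al_k,\data_k}(\signal_k)$ and \ref{cond:conv3} this makes $(\signal_k)$ bounded, so along a subsequence $\signal_k\rightharpoonup\signal^\ddag$, and $\reg(\signal_k)$ is then also bounded below by weak lower semicontinuity. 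The first estimate, with $\delta_k,\al_k\to 0$, forces $\norm{\Ao\signal_k-\data_k}\to 0$, hence $\Ao\signal_k\to\data$ and, by weak continuity of $\Ao$, $\Ao\signal^\ddag=\data$. For the $\reg$--optimality, inserting an arbitrary $\signal$ with $\Ao\signal=\data$ into the critical--point inequality gives $\reg(\signal_k)\le\tfrac{\delta_k^2}{2\al_k}+\reg(\signal)+\phi(\signal)$; taking $\liminf$ along the subsequence (weak lower semicontinuity of $\reg$) and then the infimum over such $\signal$ yields $\reg(\signal^\ddag)\le\inf\set{\reg(\signal)+\phi(\signal)\mid\Ao\signal=\data}$. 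If $\Ao\signal=\data$ has a unique solution, this argument applies to every subsequence, so every subsequence of $(\signal_k)$ has a further subsequence converging weakly to $\signal^\ddag$; applying the subsequence principle to the scalar sequences $(\innerprod{\signal_k,v})_k$, $v\in\X$, gives $\signal_k\rightharpoonup\signal^\ddag$.

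\textbf{The subgradient limit and the main obstacle.} Here $\xi_k=-\Ao^*(\Ao\signal_k-\data_k)/\al_k$, so each $\xi_k$ lies in $\ran(\Ao^*)$ and any weak cluster point $\xi$ of $(\xi_k)$ lies in the closed subspace $\overline{\ran(\Ao^*)}$. The substantive part --- and the step I expect to be the main obstacle --- is to show $\xi\in\partial_\phi\reg(\signal^\ddag)$. Passing to a further subsequence along which additionally $\signal_k\rightharpoonup\signal^\ddag$ solves $\Ao\signal=\data$, I would rewrite the defining inequality $\reg(\signal_k)+\innerprod{\xi_k,\signal-\signal_k}\le\reg(\signal)+\phi(\signal)$ as $\reg(\signal_k)+\innerprod{\xi_k,\signal-\signal^\ddag}\le\reg(\signal)+\phi(\signal)+\innerprod{\xi_k,\signal_k-\signal^\ddag}$. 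The obstruction is that $\innerprod{\xi_k,\signal_k-\signal^\ddag}$ pairs two only weakly convergent sequences and cannot be passed to the limit directly; it is handled using the explicit form of $\xi_k$ together with $\Ao\signal^\ddag=\data$, which give $\innerprod{\xi_k,\signal_k-\signal^\ddag}=-\norm{\Ao\signal_k-\data_k}^2/\al_k-\innerprod{\Ao\signal_k-\data_k,\data_k-\data}/\al_k$, where the first summand is non-positive and the second is bounded in absolute value by $\norm{\Ao\signal_k-\data_k}\,\delta_k/\al_k\to 0$ (by the first a priori estimate and $\delta_k^2/\al_k\to 0$); hence $\limsup_k\innerprod{\xi_k,\signal_k-\signal^\ddag}\le 0$. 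Taking $\liminf$ of the left-hand side (weak lower semicontinuity of $\reg$, and $\innerprod{\xi_k,\signal-\signal^\ddag}\to\innerprod{\xi,\signal-\signal^\ddag}$) against $\limsup$ of the right-hand side yields $\reg(\signal^\ddag)+\innerprod{\xi,\signal-\signal^\ddag}\le\reg(\signal)+\phi(\signal)$ for all $\signal$, i.e.\ $\xi\in\partial_\phi\reg(\signal^\ddag)$.
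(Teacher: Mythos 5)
The paper does not prove this theorem at all --- it is imported verbatim from \cite{obmann2022convergence} with the remark ``the analysis of [that paper] implies the following'' --- so there is no in-paper proof to compare against. Your reconstruction follows the standard variational-regularization route (sum rule for the relative subdifferential, a priori estimates from inserting an exact solution, direct method plus weak lower semicontinuity, subsequence principle) and I find it correct; in particular your treatment of the weak--weak pairing $\innerprod{\xi_k,\signal_k-\signal^\ddag}$ via the explicit form $\xi_k=-\Ao^*(\Ao\signal_k-\data_k)/\al_k$ and the bound $\norm{\Ao\signal_k-\data_k}\,\delta_k/\al_k\to 0$ is exactly the right way to close the subgradient-limit step. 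Two small points. First, what your argument (correctly) delivers for the last bullet is $\xi\in\overline{\ran(\Ao^*)}=\ker(\Ao)^\perp$, not $\xi\in\ker(\Ao)$ as the theorem literally states; the statement appears to contain a typo, and you should say explicitly that you are proving the $\ker(\Ao)^\perp$ version rather than silently substituting it. Second, the boundedness of $(\signal_k)$ in the convergence part is slightly glossed: \ref{cond:conv3} gives coercivity for each fixed $\al$, while $\al_k\to 0$, so you should spell out that the bounded discrepancy $\norm{\Ao\signal_k-\data_k}$ together with the upper bound on $\reg(\signal_k)$ bounds the coercive functional $\tfrac12\norm{\Ao(\cdot)-\data_1}^2+\al_1\reg$ along the sequence (and, if $\reg$ is not assumed nonnegative, that the upper bound on $\reg(\signal_k)$ must be established before concluding $\norm{\Ao\signal_k-\data_k}\to 0$). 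Neither issue affects the substance of the argument.
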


\subsection{Example}

Before we begin our discussion of convergence  rates we start with a simple example which shows that without any further assumptions on the $\al \phi$-critical points, convergence in the value  of $\reg$ and in the Bregman-distance does not  hold.

\begin{example}[Non-convergence in $\reg$]  \label{ex:rates}
Define the operator $\Ao \colon \ell^2(\N) \to \ell^2(\N)$ by $ \Ao \signal =  (\signal_1, 0, \signal_3 / 3, \dots)$ and the regularizer $\reg  =  \norm{\cdot}_2^2/2$. Let $\data \in \ran(\Ao \Ao^*)$, $\norm{\data_k - \data} \leq \delta_k$ where $\delta_k \to 0$ and $ \supp( \data_k ) \subseteq 2\N -1$ and $\signal_k = \argmin \tik_k$ with $\tik_k \coloneqq  \tik_{\alpha_k, y_k}$, $\al_k \asymp \delta_k$.
According to standard Tikhonov regularization, $(\signal_k)_k \to  \signal^\ddag  = (\data_1, 0, 3 \data_3 , \dots)$ at the convergence rate  $\norm{\signal_k -  \signal^\ddag }^2 = \mathcal{O}(\delta_k)$. We consider two cases.

\begin{enumerate}[topsep=0.5em, itemsep=0em]
\item For given  $\varepsilon > 0$ denote $z_k = \signal_k + \eps e_{2k}$ where  $e_{2k} \in \ell^2(\N)$ has entries  $0$ except for the $(2k)$-th entry where the value is $1$. Then $\tik_k (\signal_{\eps, k})  \leq   \tik_k (\signal) +  \alpha_k \eps^2/2$  and thus $\signal_{\eps, k}$ is an $(\alpha_k \eps^2 / 2)$-critical point of $\tik_k$. Further,  $z_k \rightharpoonup \signal^\ddag$  and $\reg(z_k)  \to \reg(\signal^\ddag) + \eps^2 / 2 $ as $k \to \infty$.

\item  For $\eps_k \to 0$ consider $z_k = \signal_k + \eps_k e_{2k}$.  Then 
$z_k \rightharpoonup \signal^\ddag$  and $\abs {\reg(z_k)  - \reg(\signal^\ddag)} \leq \eps_k^2 / 2 $. In particular,  $\reg(z_k)  \to  \reg(\signal^\ddag) $ at a rate determined by $\eps_k$.
 \end{enumerate}
\end{example}

These considerations show that  without further assumptions on the critical points, the Bregman distance might  not converge to zero and even if it converges, the convergence can be arbitrarily slow. To address  this issue, we instead either work  with the symmetric Bregman-distance (see Section~\ref{sec:rates}) or   use a near-minimization concept (see Section~\ref{sec:gap}).

\section{Rates in the symmetric Bregman-distance} \label{sec:rates}

Throughout this section, let  $\reg \colon \X \to [0, \infty]$  be a possibly non-convex regularization functional that satisfies Condition~\ref{cond:conv} with tolerance function $\phi$.  Further  let $G$  be a gradient selection for $\reg$  and let the $\al\phi$-critical points   $\signal_\al^\delta$ of  $\tik_{\al, \data^\delta}$ be chosen such that $\Ao^*(\Ao \signal_\al^\delta  - \data) + \alpha G(\signal_\al^\delta) = 0$.

\subsection{Error estimates}
Convergence rates  will be derived under the following assumption on $\signal^\ddag$.

\begin{cond}[Convergence rates] \label{cond:rate} \hfill
\begin{enumerate}[label=(B\arabic*), leftmargin=2em, topsep=0.5em, itemsep=0em]
\item  \label{cond:rate1} $G(\signal^\ddag) \in \ran(\Ao^*)$
\item  \label{cond:rate2} $\exists c  \forall z \in \M(\signal^\ddag) \colon \innerprod{G(z), \signal^\ddag - z} \leq c \norm{\Ao (z -  \signal^\ddag)}$.\\
Here and below 
$\M(\signal^\ddag) = \set{\signal \in \X \colon  G(\signal) \in \ran(\Ao^*) \wedge  \abs{\reg(\signal) -  \reg(\signal^\ddag)} < \phi(\signal^\ddag) }$.\end{enumerate}
\end{cond}

We have the following result.

\begin{theorem}[Convergence rates] \label{thm:rates}
Let Condition \ref{cond:rate} hold,  let $\data \in \ran(\Ao)$,  $(\data_k)_k \in \Y^\N$ satisfy $\norm{\data_k - \data} \leq \delta_k$ where $\delta_k \to 0$ and let $\al_k \asymp \delta_k$.  Let $x_k$ satisfy $\Ao^*(\Ao \signal_k - \data_k) + \alpha_k G(\signal_k) = 0$ and let $(\signal_k)_k$ weakly converge to $\signal^\ddag$. Then the following hold
\begin{enumerate}[topsep=0.5em, itemsep=0em,label=(\arabic*) ]
    \item \label{rate1}$\norm{\Ao \signal_k - \data_k} = \mathcal{O}(\delta_k)$
    \item \label{rate2} $\abs{\sbregman_G(\signal_k, \signal^\ddag)} = \mathcal{O}(\delta_k)$.
\end{enumerate}
\end{theorem}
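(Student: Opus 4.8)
The plan is to obtain both estimates by testing the first‑order equation $\Ao^*(\Ao\signal_k-\data_k)+\al_k G(\signal_k)=0$ against $\signal_k-\signal^\ddag$ and then playing the source condition \ref{cond:rate1} off against the variational inequality \ref{cond:rate2}. I first record that $\signal^\ddag$ solves $\Ao\signal=\data$: because $\al_k\asymp\delta_k$ we have $\delta_k,\al_k,\delta_k^2/\al_k\to0$, and since $G(\signal_k)\in\partial_\phi\reg(\signal_k)$ the equation gives $0\in\Ao^*(\Ao\signal_k-\data_k)+\al_k\partial_\phi\reg(\signal_k)$, so $\signal_k$ is an $\al_k\phi$‑critical point of $\tik_{\al_k,\data_k}$ and the convergence statement recalled in Section~\ref{sec:preliminaries} shows its weak limit $\signal^\ddag$ to be a solution of $\Ao\signal=\data$. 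Writing $r_k\coloneqq\Ao\signal_k-\data_k$ and $\eta_k\coloneqq\data_k-\data$, so that $\norm{\eta_k}\le\delta_k$ and $\Ao(\signal_k-\signal^\ddag)=r_k+\eta_k$, testing the equation yields
\begin{equation}\label{eq:rates-id}
\norm{r_k}^2+\innerprod{r_k,\eta_k}=\al_k\,\innerprod{G(\signal_k),\signal^\ddag-\signal_k}\,.
\end{equation}

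I next establish that, for $k$ large, either $\signal_k\in\M(\signal^\ddag)$ — so that \ref{cond:rate2} is applicable at $z=\signal_k$ — or else $\norm{r_k}\le\delta_k$ holds outright. That $G(\signal_k)\in\ran(\Ao^*)$ is immediate from $\al_k G(\signal_k)=-\Ao^*r_k$. For $\abs{\reg(\signal_k)-\reg(\signal^\ddag)}<\phi(\signal^\ddag)$, weak lower semicontinuity of $\reg$ (Condition~\ref{cond:conv1}) and $\signal_k\rightharpoonup\signal^\ddag$ give $\liminf_k\reg(\signal_k)\ge\reg(\signal^\ddag)$, hence (assuming $\phi(\signal^\ddag)>0$) $\reg(\signal_k)>\reg(\signal^\ddag)-\phi(\signal^\ddag)$ for $k$ large; and the $\al_k\phi$‑critical‑point inequality compared at $\signal^\ddag$, with $\norm{\Ao\signal^\ddag-\data_k}\le\delta_k$, gives $\tfrac12\norm{r_k}^2+\al_k\reg(\signal_k)\le\tfrac12\delta_k^2+\al_k\bigl(\reg(\signal^\ddag)+\phi(\signal^\ddag)\bigr)$, so if $\reg(\signal_k)\ge\reg(\signal^\ddag)+\phi(\signal^\ddag)$ this immediately forces $\norm{r_k}\le\delta_k$, while otherwise $\reg(\signal_k)<\reg(\signal^\ddag)+\phi(\signal^\ddag)$ and $\signal_k\in\M(\signal^\ddag)$. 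I expect this step — confirming that \ref{cond:rate2} really is applicable along the sequence — to be the main obstacle, because the energy comparison only reaches the boundary of $\M(\signal^\ddag)$ and the case split (or a sharper estimate of $\reg(\signal_k)$) is what is needed to push past it.

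For estimate~\ref{rate1}, in the case $\signal_k\in\M(\signal^\ddag)$ the inequality \ref{cond:rate2} gives $\innerprod{G(\signal_k),\signal^\ddag-\signal_k}\le c\norm{\Ao(\signal_k-\signal^\ddag)}=c\norm{r_k+\eta_k}\le c(\norm{r_k}+\delta_k)$; inserting this and $\innerprod{r_k,\eta_k}\ge-\delta_k\norm{r_k}$ into~\eqref{eq:rates-id} and using $\al_k\le C_2\delta_k$ yields the quadratic inequality
\begin{equation}\label{eq:rates-quad}
\norm{r_k}^2\le(1+C_2c)\,\delta_k\,\norm{r_k}+C_2c\,\delta_k^2\,,
\end{equation}
whence $\norm{r_k}\le\tfrac12\bigl((1+C_2c)+\sqrt{(1+C_2c)^2+4C_2c}\,\bigr)\delta_k$; together with the bound $\norm{r_k}\le\delta_k$ from the complementary case this gives $\norm{\Ao\signal_k-\data_k}=\mathcal{O}(\delta_k)$.

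For estimate~\ref{rate2}, I split $\sbregman_G(\signal_k,\signal^\ddag)=\innerprod{G(\signal_k),\signal_k-\signal^\ddag}-\innerprod{G(\signal^\ddag),\signal_k-\signal^\ddag}$. By~\eqref{eq:rates-id} the first term equals $-\al_k^{-1}\bigl(\norm{r_k}^2+\innerprod{r_k,\eta_k}\bigr)$, so it is bounded in modulus by $\al_k^{-1}\bigl(\norm{r_k}^2+\delta_k\norm{r_k}\bigr)$, which by estimate~\ref{rate1} and $\al_k\ge C_1\delta_k$ is $\mathcal{O}(\delta_k^2/\al_k)=\mathcal{O}(\delta_k)$. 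For the second term, \ref{cond:rate1} provides $\omega\in\Y$ with $G(\signal^\ddag)=\Ao^*\omega$, so $\innerprod{G(\signal^\ddag),\signal_k-\signal^\ddag}=\innerprod{\omega,\Ao(\signal_k-\signal^\ddag)}=\innerprod{\omega,r_k+\eta_k}$ is bounded in modulus by $\norm{\omega}(\norm{r_k}+\delta_k)=\mathcal{O}(\delta_k)$, again using estimate~\ref{rate1}. Adding gives $\abs{\sbregman_G(\signal_k,\signal^\ddag)}=\mathcal{O}(\delta_k)$, which is the claim — modulo the applicability point flagged above.
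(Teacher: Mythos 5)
Your proof is correct and rests on the same ingredients as the paper's: testing $\Ao^*(\Ao \signal_k - \data_k) + \al_k G(\signal_k) = 0$ against $\signal_k - \signal^\ddag$, the comparison of $\tik_k$ at $\signal^\ddag$, and the two parts of Condition~\ref{cond:rate}. The differences are organizational but worth recording. First, you actually prove the membership $\signal_k \in \M(\signal^\ddag)$ (or bypass it), where the paper only asserts it by reference to \cite{obmann2022convergence}: your case split --- either $\reg(\signal_k) \geq \reg(\signal^\ddag) + \phi(\signal^\ddag)$, in which case the energy comparison already forces $\norm{\Ao \signal_k - \data_k} \leq \delta_k$, or else $\signal_k \in \M(\signal^\ddag)$ --- is exactly the device needed to get past the boundary of $\M(\signal^\ddag)$, and your caveat that the lower bound requires $\phi(\signal^\ddag) > 0$ correctly identifies a degenerate case: when $\phi(\signal^\ddag) = 0$ the set $\M(\signal^\ddag)$ is empty and \ref{cond:rate2} is vacuous, an issue the paper shares silently and which is why the convex case is handled separately in Corollary~\ref{cor:convex}. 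Second, you establish \ref{rate1} first via a quadratic inequality in $\norm{\Ao \signal_k - \data_k}$ and then deduce \ref{rate2} by a triangle-inequality split of $\sbregman_G$, whereas the paper obtains both simultaneously by absorbing $\al_k \norm{\Ao \signal_k - \data_k}$ with Young's inequality and handling the sign of $\sbregman_G$ with the factor $\eta_k \in \set{0,2}$. A small bonus of your ordering is the observation that, once \ref{rate1} is known, the bound on $\abs{\sbregman_G(\signal_k, \signal^\ddag)}$ needs only the source condition \ref{cond:rate1} together with the exact identity for $\innerprod{G(\signal_k), \signal_k - \signal^\ddag}$, and no further use of \ref{cond:rate2}.
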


\begin{proof}
By definition of the symmetric Bregman distance we have
\begin{equation*}
    \abs{\sbregman_G(\signal_k, \signal^\ddag)} = \innerprod{G(\signal_k) - G(\signal^\ddag), \signal_k - \signal^\ddag} 
    + \eta_k \innerprod{G(\signal^\ddag) - G(\signal_k), \signal_k - \signal^\ddag}
 \end{equation*}
  for $\eta_k   \in \set{0, 2}$ depending on the sign of $\innerprod{G(\signal_k) - G(\signal^\ddag), \signal_k - \signal^\ddag}$.   By construction of the critical points and following the proof in \cite{obmann2022convergence} we have $\signal_k \in \M(\signal^\ddag)$ for $k$ sufficiently large.  By Condition~\ref{cond:rate},
\begin{itemize}
  \item  $ \innerprod{-G(\signal^\ddag), \signal_k - \signal^\ddag}  \leq  C_1 \left( \delta_k + \norm{\Ao \signal_k - \data_k} \right)$
  \item $  \innerprod{G(\signal^\ddag) - G(\signal_k), \signal_k - \signal^\ddag} \leq C_2 \left( \delta_k + \norm{\Ao \signal_k - \data_k} \right) $
\end{itemize}
for some constants $C_1, C_2>0$. Using the  definition of  $\signal_k$, the convexity of the data-fit term, equality $\Ao \signal^\ddag = \data$ and the estimate  $\norm{\data - \data_k} \leq \delta_k$ we get
\begin{linenomath*}
\begin{align*}
    &\frac{1}{2} \norm{\Ao \signal_k - \data_k}^2 + \alpha_k \innerprod{G(\signal_k), \signal_k - \signal^\ddag}
    \\ &= \frac{1}{2} \norm{\Ao \signal_k - \data_k}^2 + \innerprod{\Ao^* (\Ao \signal_k - \data_k), \signal^\ddag - \signal_k}
    \\& \leq \frac{1}{2} \norm{\Ao \signal^\ddag - \data_k}^2 \leq  \delta_k^2 /2 \,.
\end{align*}
\end{linenomath*}
The above  estimates together with  Young's product-inequality gives
\begin{linenomath*}
\begin{equation*}
    \frac{1}{2} \norm{\Ao \signal_k - \data_k}^2 + \alpha_k \abs{\sbregman_G(\signal_k, \signal^\ddag)}
   \leq \frac{1}{2} \delta_k^2 + C_3 \alpha_k \delta_k + C_4 \alpha_k^2 + \frac{1}{4} \norm{\Ao \signal_k - \data_k}^2 \,,
\end{equation*}
\end{linenomath*}
for  some constants $C_3, C_4>0$. With the parameter choice   $\alpha_k \asymp \delta_k$ we obtain \ref{rate1}, \ref{rate2}.
\end{proof}

\begin{corollary}[Convex case] \label{cor:convex}
If $\reg$ is convex and  $\phi =0$,  then  \ref{cond:rate1} implies  \ref{rate1}, \ref{rate2} of Theorem~\ref{thm:rates}.
\end{corollary}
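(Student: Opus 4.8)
The plan is to check that, in the convex case with $\phi = 0$, Condition~\ref{cond:rate} reduces to its first part \ref{cond:rate1}, so that the hypotheses of Theorem~\ref{thm:rates} are automatically met. First I would recall that when $\phi = 0$ the tolerance set degenerates: since $\phi(\signal^\ddag) = 0$, the strict inequality $\abs{\reg(\signal) - \reg(\signal^\ddag)} < \phi(\signal^\ddag)$ is never satisfied, so $\M(\signal^\ddag) = \emptyset$. Hence \ref{cond:rate2} holds vacuously, and only \ref{cond:rate1}, i.e. $G(\signal^\ddag) \in \ran(\Ao^*)$, remains as a genuine assumption. Wait — this makes $\M$ empty, which would also make the step ``$\signal_k \in \M(\signal^\ddag)$ for $k$ large'' in the proof of Theorem~\ref{thm:rates} fail, so the more careful route is to observe that in the convex case the two bullet estimates in that proof can be re-derived directly from convexity rather than from membership in $\M(\signal^\ddag)$.

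Concretely, I would argue as follows. Since $\reg$ is convex and $\phi = 0$, the gradient selection $G$ satisfies $G(\signal) \in \partial \reg(\signal)$ in the ordinary (monotone) sense, so $\sbregman_G(\signal_k, \signal^\ddag) = \innerprod{G(\signal_k) - G(\signal^\ddag), \signal_k - \signal^\ddag} \geq 0$; thus the absolute value is superfluous and the sign factor $\eta_k$ may be taken to be $0$. Writing $G(\signal^\ddag) = \Ao^* \omega$ for some $\omega \in \Y$ using \ref{cond:rate1}, we get the bound $\abs{\innerprod{-G(\signal^\ddag), \signal_k - \signal^\ddag}} = \abs{\innerprod{\omega, \Ao(\signal^\ddag - \signal_k)}} \leq \norm{\omega}\,(\norm{\Ao\signal_k - \data_k} + \delta_k)$, which is exactly the first bullet with $C_1 = \norm{\omega}$; the second bullet is then not needed at all since $\eta_k = 0$ kills that term. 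From here the remainder of the proof of Theorem~\ref{thm:rates} goes through verbatim: the chain using the definition of $\signal_k$, convexity of the data-fit term, $\Ao\signal^\ddag = \data$ and $\norm{\data - \data_k}\leq\delta_k$ yields $\tfrac12\norm{\Ao\signal_k - \data_k}^2 + \alpha_k \sbregman_G(\signal_k,\signal^\ddag) \leq \delta_k^2/2 + \alpha_k \norm{\omega}(\delta_k + \norm{\Ao\signal_k - \data_k})$, and Young's inequality plus $\alpha_k \asymp \delta_k$ gives \ref{rate1} and \ref{rate2}.

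The only subtlety — and the step I would flag as needing care — is the interplay between the empty set $\M(\signal^\ddag)$ and the citation to \cite{obmann2022convergence}: one must not simply say ``Condition~\ref{cond:rate} holds'' and invoke Theorem~\ref{thm:rates} as a black box, because the internal step $\signal_k \in \M(\signal^\ddag)$ is then false. The clean fix is to note that in the convex case that internal step is only used to produce the two bullet estimates, and those follow here directly from $G(\signal^\ddag) \in \ran(\Ao^*)$ together with monotonicity of $\partial\reg$ (which also furnishes nonnegativity of $\sbregman_G$, so that $\eta_k = 0$). With this observation the proof of Theorem~\ref{thm:rates} applies and the corollary follows.
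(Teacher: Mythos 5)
Your proof is correct and rests on the same two ingredients as the paper's: monotonicity of the convex subdifferential, $\innerprod{G(z) - G(\signal^\ddag), z - \signal^\ddag} \geq 0$, combined with the source condition $G(\signal^\ddag) = \Ao^*\omega$, yielding $\innerprod{G(z), \signal^\ddag - z} \leq \innerprod{G(\signal^\ddag), \signal^\ddag - z} \leq \norm{\omega}\,\norm{\Ao(z - \signal^\ddag)}$ for \emph{all} $z \in \X$. The paper simply observes that this global inequality implies \ref{cond:rate2} and then invokes Theorem~\ref{thm:rates} as a black box. Your additional observation is legitimate and sharper than the paper's write-up: with $\phi = 0$ the set $\M(\signal^\ddag)$ is indeed empty (the strict inequality $\abs{\reg(\signal)-\reg(\signal^\ddag)} < 0$ has no solutions), so \ref{cond:rate2} holds vacuously while the internal step ``$\signal_k \in \M(\signal^\ddag)$ for $k$ large'' of the theorem's proof is, taken literally, unusable. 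Your fix --- re-deriving the first bullet estimate directly from the source condition and discarding the second bullet because nonnegativity of $\sbregman_G$ forces $\eta_k = 0$ --- is exactly the right repair, and it is implicitly what makes the paper's argument sound, since the inequality is established for all $z$ rather than only on $\M(\signal^\ddag)$. So: same mathematical content, but your version makes explicit a formal gap that the paper's one-line invocation of Theorem~\ref{thm:rates} papers over.
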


\begin{proof}
Since $\reg$ is convex and $G(\signal)$ is a regular subgradient,  $\innerprod{G(z) - G(\signal^\ddag), z - \signal^\ddag} \geq 0$ for any $z \in \X$. By  \ref{cond:rate1}  we get $ \innerprod{G(z), \signal^\ddag - z} \leq \innerprod{G(\signal^\ddag), \signal^\ddag - z} \leq \norm{w} \norm{\Ao \signal^\ddag - \Ao z}$. This   implies  \ref{cond:rate2}   and thus \ref{rate1}, \ref{rate2} of Theorem~\ref{thm:rates}.
\end{proof}

Corollary~\ref{cor:convex} with $\reg(\signal) = \norm{\Ko \signal}_2^2$ for a bounded linear  $\Ko$ recovers the known rate $\norm{\Ko(\signal_k - \signal^\ddag)}^2 = \mathcal{O}(\delta_k)$.
Next we discuss regularizers which locally behave convexly around the exact solution.

\begin{remark}[Locally convex case]
Let $\data \in \ran(\Ao)$ and let $\signal^\ddag$ be a solution to $\Ao \signal = \data$ which satisfies \ref{cond:rate1}. Assume further that $\reg$ is locally convex  at $\signal^\ddag$ and that $  \reg(\signal^\ddag) + \innerprod{G(\signal^\ddag), \signal - \signal^\ddag} \leq \reg(\signal)$ for some $r > 0$ and all $\signal \in  B_r(\signal^\ddag)$.
\begin{enumerate}[topsep=0.5em,itemsep=0em, label=(\arabic*)]
\item
If $x \in B_r(\signal^\ddag)$  satsfies  $\Ao \signal = \data$, then \ref{cond:rate1}  and $\signal - \signal^\ddag \in \ker(\Ao)$ imply
\begin{linenomath*}
\begin{equation*}
    \reg(\signal^\ddag) = \reg(\signal^\ddag) + \innerprod{G(\signal^\ddag), \signal - \signal^\ddag} \leq \reg(\signal) \,.
\end{equation*}
\end{linenomath*}
Thus  $\signal^\ddag$ is a local minimizer of $\reg$ on the set of all solutions of $\Ao \signal = \data$.

\item Assume further that $\signal^\ddag$ is the weak limit of a sequence $(\signal_k)_k$ which was constructed according to Theorem~\ref{thm:rates}. Then $(\signal_k)_k$ converges to a locally $\reg$-minimizing solution of $\Ao \signal = \data$ with rates given by Theorem~\ref{thm:rates}. This recovers  a local version of the well known convergence rates for  $\reg$-minimizing solutions  \cite{scherzer2009variational}.

\item Finally, let  $\reg$ be  locally strongly convex around $\signal^\ddag$ and $  \innerprod{G(z) - G(\signal^\ddag), z - \signal^\ddag} \geq \mu \norm{z - \signal^\ddag}^2 $ for all $z \in B_r(\signal^\ddag)$ and  some $r, \mu > 0$. By Theorem~\ref{thm:rates} we get $\norm{\signal_k - \signal^\ddag} = \mathcal{O}(\sqrt{\delta_k})$, if $\signal_k \in B_r(\signal^\ddag)$. \end{enumerate}
\end{remark}

\begin{corollary}[Finite dimensional case] \label{cor:finite}
Let $\X$ be a finite dimensional,  $\reg$  coercive and $G$ bounded on bounded sets. Then \ref{cond:rate2} is satisfied and the convergence rates of Theorem~\ref{thm:rates} hold under \ref{cond:rate1}.
\end{corollary}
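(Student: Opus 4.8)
The plan is to verify assumption~\ref{cond:rate2} directly from the stated hypotheses; once this is done, Theorem~\ref{thm:rates} applies verbatim under the remaining hypothesis~\ref{cond:rate1} and yields \ref{rate1} and \ref{rate2}. The verification of \ref{cond:rate2} splits into two independent ingredients: (i) a uniform bound on $\norm{G(z)}$ over $z \in \M(\signal^\ddag)$, coming from coercivity of $\reg$, and (ii) a finite-dimensional ``stability'' inequality that controls $\innerprod{w,u}$ by $\norm{\Ao u}$ whenever $w \in \ran(\Ao^*)$, coming from the existence of a smallest nonzero singular value.

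For ingredient (i), I would note that every $z \in \M(\signal^\ddag)$ satisfies $\reg(z) < \reg(\signal^\ddag) + \phi(\signal^\ddag) < \infty$ since $\phi$ is real-valued, so $\M(\signal^\ddag)$ is contained in a proper sublevel set of the coercive functional $\reg$ and is therefore bounded. As $G$ is bounded on bounded sets, $M \coloneqq \sup_{z \in \M(\signal^\ddag)} \norm{G(z)} < \infty$ (with $M = 0$ if $\M(\signal^\ddag) = \emptyset$, in which case \ref{cond:rate2} is vacuous). For ingredient (ii), I would first dispose of the trivial case $\Ao = 0$: then $\ran(\Ao^*) = \{0\}$, hence $G(z) = 0$ on $\M(\signal^\ddag)$ and \ref{cond:rate2} holds with any $c$. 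Otherwise let $\sigma > 0$ be the smallest nonzero singular value of $\Ao$, which exists by finite dimensionality. Given $w \in \ran(\Ao^*)$, write $w = \Ao^* v$ with $v \in \ran(\Ao) = \ker(\Ao^*)^\perp$; then $\norm{w} = \norm{\Ao^* v} \ge \sigma \norm{v}$, so for every $u \in \X$,
\begin{equation*}
  \abs{\innerprod{w,u}} = \abs{\innerprod{v, \Ao u}} \le \norm{v}\,\norm{\Ao u} \le \frac{\norm{w}}{\sigma}\,\norm{\Ao u} \,.
\end{equation*}

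Combining the two ingredients with the choices $w = G(z)$ and $u = \signal^\ddag - z$ gives, for every $z \in \M(\signal^\ddag)$,
\begin{equation*}
  \innerprod{G(z), \signal^\ddag - z} \le \frac{\norm{G(z)}}{\sigma}\,\norm{\Ao(z - \signal^\ddag)} \le \frac{M}{\sigma}\,\norm{\Ao(z - \signal^\ddag)} \,,
\end{equation*}
so \ref{cond:rate2} holds with $c = M/\sigma$. Together with \ref{cond:rate1}, Condition~\ref{cond:rate} is then in force and Theorem~\ref{thm:rates} delivers \ref{rate1} and \ref{rate2}.

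The argument is essentially routine; the only points that need a little care are the degenerate case $\Ao = 0$ and the requirement to pick the representative $v$ orthogonal to $\ker(\Ao^*)$ so that the singular-value lower bound $\norm{\Ao^* v} \ge \sigma\norm{v}$ is applicable. Neither constitutes a genuine obstacle, and in particular no smoothness or convexity of $\reg$ beyond Condition~\ref{cond:conv} is used.
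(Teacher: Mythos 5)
Your proof is correct and follows essentially the same route as the paper: boundedness of $\M(\signal^\ddag)$ from coercivity, boundedness of $G$ on that set, and a stability bound for elements of $\ran(\Ao^*)$. The paper phrases the last ingredient as $G(z)=\Ao^*(\Ao^*)^\ddag G(z)$ together with continuity of the pseudo-inverse $(\Ao^*)^\ddag$, which is exactly your smallest-nonzero-singular-value estimate in disguise, so the two arguments coincide up to notation.
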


\begin{proof}
Since $\reg$ is coercive the set $ \M(\signal^\ddag)$ is bounded.  From  $G = \Ao^* (\Ao^*)^\ddag G$ where $(\Ao^*)^\ddag$ denotes the pseudo-inverse of $\Ao^*$ we get
\begin{linenomath*}
\begin{equation*}
    \abs{\innerprod{G(z), \signal^\ddag - z}} = \abs{\innerprod{\Ao^* (\Ao^*)^\ddag G(z), \signal^\ddag - z}}   \leq \norm{\Ao \signal^\ddag - \Ao z} \sup_{z \in \M} \norm{(\Ao^*)^\ddag G(z)}  \,.
\end{equation*}
\end{linenomath*}
Since $(\Ao^*)^\ddag$ is continuous and $G$ is bounded on bounded sets, $\sup_{z \in \M} \norm{(\Ao^*)^\ddag G(z)} < \infty$  which shows  \ref{cond:rate2}.
\end{proof}

Corollary~\ref{cor:finite} shows that  \ref{cond:rate2}  is always satisfied in the case where $\X$ is  finite dimensional. Moreover, this corollary can  readily be extended to infinite dimensional  $\X$ if  $\Ao$ has closed range.

\begin{remark}[Smooth regularizer]
Consider assumption \ref{cond:rate2}  for a smooth regularizer with $G(\signal) = \reg'(\signal)$.
Assume that $\ker(\Ao)$ is non-empty and choose $z \in \ker(\Ao)$. For $t > 0$ and $z_\pm = \signal^\ddag \pm t z$  we have  $\innerprod{G(z_\pm), \signal^\ddag - z_1} =  - t \innerprod{\reg'(\signal^\ddag \pm t z), \pm  z} \leq 0$. Adding these inequalities and dividing by $-t^2$ we find that
\begin{linenomath*}
\begin{equation*}
    \frac{1}{t} \innerprod{\reg'(\signal^\ddag + t z) - \reg'(\signal^\ddag - t z), z} \geq 0.
\end{equation*}
\end{linenomath*}
Taking $t \to 0$ yields  $D^2\reg(\signal^\ddag)(\signal_0, \signal_0) \geq 0$ which shows that $\reg$ satisfies the necessary second order convexity   around  $\signal^\ddag$ in any direction $z \in \ker(\Ao)$. Conversely, if  $z \in \X$ is such that  $\innerprod{\reg'(z), \signal^\ddag - z} > 0$, then $z - \signal^\ddag$ is a first order descent direction of $\reg$ and  walking away from $\signal^\ddag$ in the direction $z - \signal^\ddag$ has to result in an appropriate increase in data-error. \end{remark}

\subsection{Converse result}

Next we show that the source  condition is not only sufficient, but essentially also necessary for the convergence rates to hold.

\begin{proposition}[Necessity of source condition] \label{prop:necessity}
Let $\data, (\data_k)_k, (\signal_k)_k$, $(\delta_k)_k$, $(\al_k)_k$, $\signal^\ddag$ be as in Theorem~\ref{thm:rates} and assume  that $G$ is weakly continuous. Then  $ \norm{\Ao \signal_k - \data_k} = \mathcal{O}(\delta_k)$ implies the range condition \ref{cond:rate1} and the convergence rate $\sbregman_G(\signal_k, \signal^\ddag) = \mathcal{O}(\delta_k)$.
\end{proposition}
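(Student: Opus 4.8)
The plan is to run the argument from Theorem~\ref{thm:rates} in reverse. Starting from the hypothesis $\norm{\Ao\signal_k - \data_k} = \mathcal{O}(\delta_k)$, I first recall the critical point equation $\Ao^*(\Ao\signal_k - \data_k) + \alpha_k G(\signal_k) = 0$, which yields $G(\signal_k) = -\alpha_k^{-1}\Ao^*(\Ao\signal_k - \data_k) = \Ao^*\bigl(-\alpha_k^{-1}(\Ao\signal_k - \data_k)\bigr)$. Since $\alpha_k \asymp \delta_k$ and $\norm{\Ao\signal_k - \data_k} = \mathcal{O}(\delta_k)$, the vectors $w_k := -\alpha_k^{-1}(\Ao\signal_k - \data_k) \in \Y$ are uniformly bounded in norm. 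So $G(\signal_k)$ lies in $\Ao^*$ applied to a bounded set for every $k$.

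Next I would pass to the limit. By the Banach--Alaoglu theorem the bounded sequence $(w_k)_k$ has a weakly convergent subsequence $w_{k_j} \rightharpoonup w$ in $\Y$; since $\Ao^*$ is bounded and linear it is weakly continuous, so $G(\signal_{k_j}) = \Ao^* w_{k_j} \rightharpoonup \Ao^* w$. On the other hand, $\signal_k \rightharpoonup \signal^\ddag$ by hypothesis, and $G$ is assumed weakly continuous, so $G(\signal_{k_j}) \rightharpoonup G(\signal^\ddag)$. By uniqueness of weak limits, $G(\signal^\ddag) = \Ao^* w \in \ran(\Ao^*)$, which is exactly \ref{cond:rate1}.

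For the rate on the symmetric Bregman distance, I would expand $\sbregman_G(\signal_k,\signal^\ddag) = \innerprod{G(\signal_k) - G(\signal^\ddag), \signal_k - \signal^\ddag}$ and use the representation $G(\signal_k) = \Ao^* w_k$ together with the (now established) representation $G(\signal^\ddag) = \Ao^* w$. This gives
\begin{linenomath*}
\begin{equation*}
\sbregman_G(\signal_k,\signal^\ddag) = \innerprod{\Ao^*(w_k - w), \signal_k - \signal^\ddag} = \innerprod{w_k - w, \Ao\signal_k - \Ao\signal^\ddag} = \innerprod{w_k - w, \Ao\signal_k - \data},
\end{equation*}
\end{linenomath*}
using $\Ao\signal^\ddag = \data$. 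By Cauchy--Schwarz this is bounded by $\norm{w_k - w}\,\bigl(\norm{\Ao\signal_k - \data_k} + \norm{\data_k - \data}\bigr)$, and since $\norm{w_k - w}$ is bounded while $\norm{\Ao\signal_k - \data_k} = \mathcal{O}(\delta_k)$ and $\norm{\data_k - \data} \leq \delta_k$, we conclude $\sbregman_G(\signal_k,\signal^\ddag) = \mathcal{O}(\delta_k)$.

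The main subtlety I anticipate is bookkeeping around subsequences: the weak-convergence argument for \ref{cond:rate1} naturally only produces $G(\signal^\ddag) \in \ran(\Ao^*)$ along a subsequence, but since the limit $G(\signal^\ddag)$ is a fixed element, a standard subsequence-of-every-subsequence argument upgrades this to the full conclusion, and moreover identifies $\Ao^* w = G(\signal^\ddag)$ for \emph{every} weak cluster point $w$ of $(w_k)$, which is all that is needed in the Bregman estimate since the bound $\norm{w_k - w} \leq \norm{w_k} + \norm{w}$ only uses boundedness of $(w_k)$. A minor point to state carefully is that $\Ao$ (hence $\Ao^*$) maps weakly convergent sequences to weakly convergent ones, which is immediate for bounded linear operators on Hilbert spaces. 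Everything else is the same Cauchy--Schwarz / norm-splitting bookkeeping already used in the proof of Theorem~\ref{thm:rates}.
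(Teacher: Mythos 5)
Your proposal is correct and follows essentially the same route as the paper's proof: extract a weak limit $w$ of the bounded sequence $w_k = (\Ao\signal_k - \data_k)/\al_k$, use weak continuity of $G$ and uniqueness of weak limits to identify $G(\signal^\ddag) = -\Ao^*w \in \ran(\Ao^*)$, and then bound $\abs{\sbregman_G(\signal_k,\signal^\ddag)} = \abs{\innerprod{w_k - w, \Ao\signal_k - \data}}$ by Cauchy--Schwarz. Your added care about the subsequence bookkeeping (the identification of $G(\signal^\ddag)$ is a statement about a fixed element, and the final estimate only needs boundedness of $\norm{w_k - w}$) is a point the paper leaves implicit, but the argument is the same.
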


\begin{proof}
From  $\norm{\Ao \signal_k - \data_k} = \mathcal{O}(\delta_k)$ it follows  that  $w_k = (\Ao \signal_k - \data_k) / \al_k$ is bounded. Hence it has subsequence, again denoted  by $(w_k)_k$, which weakly converges to $w$.
Because $\Ao$ is continuous, $G(\signal_k)  = -\Ao^* w_k \rightharpoonup -\Ao^* w$.  Due to the weak continuity of $G$ and the uniqueness of the limit it follows that $G(\signal^\ddag) =  - \Ao^* w$. Moreover,
\begin{linenomath*}
\begin{equation*}
    \abs{\sbregman_G(\signal_k, \signal^\ddag)}
    = \abs{\innerprod{\Ao^*(w_k - w), \signal_k - \signal^\ddag}} 
    \leq C \left( \delta_k + \norm{\Ao \signal_k - \data_k} \right)
\end{equation*}
\end{linenomath*}
implies the desired rate.
\end{proof}

Proposition~\ref{prop:necessity} suggests the Morozovs discrepancy principle (see \cite{bonesky2008morozov, anzengruber2009morozov} and references therein)  for selecting the regularization parameter, since in this case the condition $\norm{\Ao \signal_k - \data_k} = \mathcal{O}(\delta_k)$ is satisfied by definition. While  this  is an interesting line of future research such an analysis is beyond the scope of this paper.

\subsection{Inexact critical points}

In the following  we consider  inexact critical points where   $\norm{\Ao^*(\Ao \signal_k - \data_k) + \al_k G(\signal_k)}$ is sufficiently small. This case has also been analyzed in \cite{obmann2022convergence} where it  has been shown to yield to a convergent regularization. The following theorem provides rates in this case.

\begin{theorem}[Inexact  rates] \label{thm:ratesinexact}
Let Condition~\ref{cond:rate} hold, $\data \in \ran(\Ao)$,  $(\data_k)_k \in \Y^\N$,  $\norm{\data_k - \data} \leq \delta_k  \to 0$ and $\al_k  \asymp \delta_k$.
Assume that $(\signal_k)_k$ is such that for $z_k = \Ao^*(\Ao \signal_k - \data_k) + \al_k G(\signal_k)$ we have  $\norm{z_k} \leq \al_k \eta_k$ for some $\eta_k \to 0$ and $\innerprod{z_k, \signal_k} \leq 0$.
Denote by $\signal^\ddag$ the weak limit of $(\signal_k)_k$.
Then the following hold
\begin{enumerate}[topsep=0.5em, itemsep=0em,label=(\arabic*)]
    \item \label{inexact-rate1} $\norm{\Ao \signal_k - \data_k}  = \mathcal{O}\Bigl(\sqrt{\delta_k^2 + \delta_k \eta_k} \Bigr)$
    \item \label{inexact-rate2} $\abs{\sbregman_G(\signal_k, \signal^\ddag)} =  \mathcal{O}(\delta_k + \eta_k)$.
\end{enumerate}
\end{theorem}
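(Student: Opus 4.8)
The plan is to mirror the proof of Theorem~\ref{thm:rates}, carrying the extra error term $z_k$ through every estimate. As before, write
\[
\abs{\sbregman_G(\signal_k, \signal^\ddag)} = \innerprod{G(\signal_k) - G(\signal^\ddag), \signal_k - \signal^\ddag} + \eta_k' \innerprod{G(\signal^\ddag) - G(\signal_k), \signal_k - \signal^\ddag}
\]
with $\eta_k' \in \set{0,2}$ chosen according to the sign, and invoke Condition~\ref{cond:rate} (together with the fact, from the proof in \cite{obmann2022convergence}, that $\signal_k \in \M(\signal^\ddag)$ for $k$ large) to bound the two terms
$\innerprod{-G(\signal^\ddag), \signal_k - \signal^\ddag}$ and $\innerprod{G(\signal^\ddag) - G(\signal_k), \signal_k - \signal^\ddag}$ each by $C(\delta_k + \norm{\Ao\signal_k - \data_k})$. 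Only \ref{cond:rate2} needs the relation $\alpha_k G(\signal_k) = z_k - \Ao^*(\Ao\signal_k - \data_k)$, which is handled below.

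Next I would redo the key computation replacing the exact optimality condition by $\alpha_k G(\signal_k) = z_k - \Ao^*(\Ao\signal_k - \data_k)$. Using the convexity of the data-fit term, $\Ao\signal^\ddag = \data$ and $\norm{\data - \data_k} \le \delta_k$,
\begin{align*}
&\frac12 \norm{\Ao\signal_k - \data_k}^2 + \alpha_k \innerprod{G(\signal_k), \signal_k - \signal^\ddag}\\
&= \frac12 \norm{\Ao\signal_k - \data_k}^2 + \innerprod{\Ao^*(\Ao\signal_k - \data_k), \signal^\ddag - \signal_k} + \innerprod{z_k, \signal_k - \signal^\ddag}\\
&\le \frac12 \delta_k^2 + \innerprod{z_k, \signal_k - \signal^\ddag}.
\end{align*}
Here the extra term is split as $\innerprod{z_k, \signal_k - \signal^\ddag} = \innerprod{z_k, \signal_k} - \innerprod{z_k, \signal^\ddag} \le \norm{z_k}\,\norm{\signal^\ddag} \le \alpha_k \eta_k \norm{\signal^\ddag}$, where the hypothesis $\innerprod{z_k, \signal_k} \le 0$ is exactly what kills the sign-indefinite piece. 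Combining this with the Condition~\ref{cond:rate} bounds on $\abs{\sbregman_G(\signal_k,\signal^\ddag)}$, after absorbing $\alpha_k$ on the left, yields
\[
\frac12 \norm{\Ao\signal_k - \data_k}^2 + \alpha_k \abs{\sbregman_G(\signal_k,\signal^\ddag)} \le \frac12 \delta_k^2 + C_3 \alpha_k \delta_k + C_4 \alpha_k^2 + C_5 \alpha_k \eta_k + \frac14 \norm{\Ao\signal_k - \data_k}^2,
\]
using Young's inequality on the $\norm{\Ao\signal_k - \data_k}$ cross-terms coming from the Condition~\ref{cond:rate} estimates. Moving $\frac14\norm{\Ao\signal_k-\data_k}^2$ to the left gives both conclusions: the first two terms plus $\alpha_k\delta_k \asymp \delta_k^2$ contribute $\mathcal O(\delta_k^2)$ and the new term contributes $\mathcal O(\delta_k\eta_k)$, whence $\norm{\Ao\signal_k - \data_k}^2 = \mathcal O(\delta_k^2 + \delta_k\eta_k)$, i.e.\ \ref{inexact-rate1}; dividing the bound on $\alpha_k \abs{\sbregman_G}$ by $\alpha_k \asymp \delta_k$ gives $\abs{\sbregman_G(\signal_k,\signal^\ddag)} = \mathcal O(\delta_k + \eta_k)$, i.e.\ \ref{inexact-rate2}.

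The only genuinely new point compared with Theorem~\ref{thm:rates} is the treatment of $\innerprod{z_k, \signal_k - \signal^\ddag}$: I expect the main (though mild) obstacle to be making sure the $\innerprod{z_k,\signal_k}\le 0$ hypothesis is actually needed and used — without it, $\innerprod{z_k,\signal_k}$ is only bounded by $\alpha_k\eta_k \norm{\signal_k}$ and one would first need an a~priori bound on $\norm{\signal_k}$ (available since $(\signal_k)_k$ converges weakly, hence is bounded), so in fact the sign condition can be seen as a convenience that avoids reopening the boundedness argument. A secondary check is that the argument "$\signal_k \in \M(\signal^\ddag)$ for $k$ large" from \cite{obmann2022convergence} still goes through for inexact critical points; this is exactly the setting treated there, so it does. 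Everything else is the same Young's-inequality bookkeeping as in Theorem~\ref{thm:rates}, now with the single additional additive term $\alpha_k\eta_k$.
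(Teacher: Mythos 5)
Your proposal is correct and follows essentially the same route as the paper: the paper's proof likewise substitutes $\al_k G(\signal_k) = z_k - \Ao^*(\Ao\signal_k - \data_k)$ into the key inequality of Theorem~\ref{thm:rates}, bounds $\innerprod{z_k,\signal_k-\signal^\ddag}\le\norm{z_k}\norm{\signal^\ddag}\le C\delta_k\eta_k$ using the sign hypothesis $\innerprod{z_k,\signal_k}\le 0$, and then reruns the Young's-inequality bookkeeping. Your remark that the sign condition could be traded for the a priori bound on $\norm{\signal_k}$ from weak convergence is a sound observation but does not change the argument.
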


 \begin{proof}
The proof is similar  to the one  of Theorem~\ref{thm:rates} and we only show main changes. By construction 
\begin{linenomath*}
\begin{align*}
    \alpha_k & \innerprod{G(\signal_k), \signal_k - \signal^\ddag}  \\
    &= \innerprod{z_k, \signal_k - \signal^\ddag} + \innerprod{\Ao^* (\Ao \signal_k - \data_k), \signal^\ddag - \signal_k}\\
    &\leq \norm{z_k} \norm{\signal^\ddag} + \innerprod{\Ao^* (\Ao \signal_k - \data_k), \signal^\ddag - \signal_k}\\
    &\leq C\delta_k \eta_k + \innerprod{\Ao^* (\Ao \signal_k - \data_k), \signal^\ddag - \signal_k} \,.
\end{align*}
\end{linenomath*}
Hence,
\begin{linenomath*}
\begin{equation*}
    \frac{1}{2} \norm{\Ao \signal_k - \data_k}^2 + \alpha_k \innerprod{G(\signal_k), \signal_k - \signal^\ddag} \leq \frac{\delta_k^2}{2}  + C\delta_k \eta_k \,.
\end{equation*}
\end{linenomath*}
Following the proof of Theorem~\ref{thm:rates} yields \ref{inexact-rate1}, \ref{inexact-rate2}.
\end{proof}

Theorem~\ref{thm:ratesinexact} provides convergence rates dependent  on $\eta_k$ as a measure for the exactness of the critical points. To recover the rates of Theorem~\ref{thm:rates}  the choice $\eta_k = \delta_k$ is appropriate. While Theorem~\ref{thm:ratesinexact} requires $\eta_k \to 0$ the same proof  can be given for  a bounded sequence $(\eta_k)_k$. In this case the absolute symmetric Bregman-distance might not converge and the convergence in the discrepancy is only $\mathcal{O}\bigl( \sqrt{\delta_k} \bigr)$.

\begin{remark}[Iterative minimization]
Suppose  that the critical points are approximated using an iterative descent algorithm and write $z_k = \Ao^*(\Ao \signal_k - \data_k) + \al_k G(\signal_k)$ where $x_k$ is the approximate critical point. In this context the conditions of Theorem~\ref{thm:ratesinexact} on $z_k$ are  quite natural. First, $\innerprod{z_k, x_k} <  0 $ is a descent  condition for  a descent direction. Second, the condition $\norm{z_k} \leq \al_k \eta_k$ is simply a common stopping criterion for the iteration  dictated by the noise-level and the desired accuracy.
\end{remark}

Note that the necessity of the source condition for inexact critical points can be derived as in Proposition~\ref{prop:necessity}. In Section~\ref{sec:numerics} we will provide an example showing that the inexact choice can indeed result in a significantly slower convergence rate.

\section{Rates for near minimizers} \label{sec:gap}

In the previous section we have derived convergence rates for exact and inexact critical points which can be very different from global minimizers.  In this section we derive convergence rates for near-minimizers  in the absolute Bregman-distance extending results of  \cite{li2020nett}.

Throughout this section let  $\data \in \ran(\Ao)$ and $(\data_k)_k \in \Y^\N$ be a sequence of noisy data with $\norm{\data_k - \data} \leq \delta_k$. Further assume $\delta_k \to 0$ and  $\al_k  \asymp \delta_k$. Let $\signal_k$ be an $(\al_k\phi)$-critical point of the Tikhonov functional $\tik_k = \frac{1}{2} \norm{\Ao  (\cdot) - \data_k}^2 + \al_k \reg$ where the regularizer $\reg \colon \X \to [0, \infty)$ satisfies  Condition~\ref{cond:conv}. Further, let $\signal^\ddag$ be the  weak limit $(\signal_k)_{k \in \N}$.

\subsection{Error estimates} 

We call $\gap_k(\signal) = \tik_k(x_k) - \tik_k(\signal)$ the Tikhonov gap between $\signal_k$ and $\signal \in \X$. Convergence rates will then be derived under the following assumption.

\begin{cond}[Rates using Tikhonov gap] \label{cond:rates-gap} \hfill
\begin{enumerate}[label=(C\arabic*), leftmargin=2em, topsep=0.5em, itemsep=0em]
\item  \label{cond:gap1} $ \exists \xi \in \partial_\phi \reg(\signal^\ddag) \cap \ran(\Ao^*)$.
\item  \label{cond:gap2}  $\exists c \,  \forall z \in \gapset(\signal^\ddag) \colon
    \reg(\signal^\ddag) - \reg(z) \leq c \norm{\Ao z - \Ao \signal^\ddag}$.
Here   $\gapset(\signal^\ddag) \coloneqq \set{\signal \in \X \colon \abs{\reg(\signal^\ddag) - \reg(\signal)} \leq \eps}$ where $\eps >   \max \set{0, \sup_k \gap_k / \al_k}$. 
  \end{enumerate}
\end{cond}

We have the following result.

\begin{theorem}[Rates using Tikhonov gap] \label{thm:ratesgap}
Let $\data, \data_k, \delta_k,\al_k$ and $\signal^\ddag$ be as introduced above and let  Assumption \ref{cond:rates-gap} be satisfied. Then 
\begin{enumerate}[label =(\arabic*), topsep=0.5em, itemsep=0em]
    \item\label{thm:ratesgap1} $\norm{\Ao \signal_k - \data_k} = \mathcal{O}\bigl(\sqrt{\delta_k^2 + \gap_k} \bigr)$
    \item\label{thm:ratesgap2} $\abs{\bregman_\xi(\signal_k, \signal^\ddag)} = \mathcal{O}(\delta_k + \gap_k / \delta_k)$.
\end{enumerate}
\end{theorem}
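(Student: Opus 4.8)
The plan is to follow the blueprint of the proof of Theorem~\ref{thm:rates}, with the symmetric‑Bregman bookkeeping replaced by the Tikhonov gap. First I would unravel the definitions: since $\Ao\signal^\ddag=\data$ and $\norm{\data-\data_k}\le\delta_k$, one has $\tik_k(\signal^\ddag)=\tfrac12\norm{\data-\data_k}^2+\al_k\reg(\signal^\ddag)\le\tfrac12\delta_k^2+\al_k\reg(\signal^\ddag)$, so $\gap_k=\tik_k(\signal_k)-\tik_k(\signal^\ddag)$ (the gap evaluated at $\signal^\ddag$) rearranges to the basic inequality
\begin{equation*}
  \tfrac12\norm{\Ao\signal_k-\data_k}^2 + \al_k\bigl(\reg(\signal_k)-\reg(\signal^\ddag)\bigr) \le \tfrac12\delta_k^2 + \gap_k \,, \tag{$\star$}
\end{equation*}
which is the analogue of the displayed estimate in the proof of Theorem~\ref{thm:rates}, now carrying the extra additive term $\gap_k$.

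The key preliminary step, and the one I expect to be the main obstacle, is to verify that $\signal_k\in\gapset(\signal^\ddag)$ for all sufficiently large $k$, so that Condition~\ref{cond:gap2} may be applied at $z=\signal_k$. The upper estimate is immediate from $(\star)$ and nonnegativity of the data term: $\reg(\signal_k)-\reg(\signal^\ddag)\le\delta_k^2/(2\al_k)+\gap_k/\al_k$; since $\al_k\asymp\delta_k$ the first summand tends to $0$, while the second is bounded by $\sup_k\gap_k/\al_k$, which is strictly below $\eps$ by the choice of $\eps$ in Condition~\ref{cond:rates-gap}, so the left‑hand side is $<\eps$ for $k$ large. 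For the matching lower estimate I would invoke weak lower semicontinuity of $\reg$ (Condition~\ref{cond:conv1}) together with $\signal_k\rightharpoonup\signal^\ddag$, giving $\liminf_k\reg(\signal_k)\ge\reg(\signal^\ddag)$ and hence $\reg(\signal_k)-\reg(\signal^\ddag)>-\eps$ eventually. Combining, $\abs{\reg(\signal_k)-\reg(\signal^\ddag)}<\eps$, i.e.\ $\signal_k\in\gapset(\signal^\ddag)$.

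Granting this, \ref{thm:ratesgap1} follows as in Theorem~\ref{thm:rates}: substitute $\al_k(\reg(\signal^\ddag)-\reg(\signal_k))\le c\,\al_k\norm{\Ao\signal_k-\Ao\signal^\ddag}\le c\,\al_k(\norm{\Ao\signal_k-\data_k}+\delta_k)$ from Condition~\ref{cond:gap2} into $(\star)$, use Young's product inequality to absorb $c\,\al_k\norm{\Ao\signal_k-\data_k}$ into $\tfrac14\norm{\Ao\signal_k-\data_k}^2$, and apply $\al_k\asymp\delta_k$ to get $\norm{\Ao\signal_k-\data_k}^2=\mathcal O(\delta_k^2+\gap_k)$. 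For \ref{thm:ratesgap2} I would write $\xi=\Ao^*w$ using Condition~\ref{cond:gap1}, so that $\abs{\innerprod{\xi,\signal_k-\signal^\ddag}}=\abs{\innerprod{w,\Ao\signal_k-\data}}\le\norm{w}\,(\norm{\Ao\signal_k-\data_k}+\delta_k)$. Then I bound $\bregman_\xi(\signal_k,\signal^\ddag)=\reg(\signal_k)-\reg(\signal^\ddag)-\innerprod{\xi,\signal_k-\signal^\ddag}$ from above via the upper bound on $\reg(\signal_k)-\reg(\signal^\ddag)$ from $(\star)$ and from below via the Condition~\ref{cond:gap2} estimate $\reg(\signal_k)-\reg(\signal^\ddag)\ge-c(\norm{\Ao\signal_k-\data_k}+\delta_k)$; both sides are $\mathcal O(\delta_k+\gap_k/\delta_k+\norm{\Ao\signal_k-\data_k})$. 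Finally, sub‑additivity of the square root and AM--GM give $\sqrt{\delta_k^2+\gap_k}\le\delta_k+\sqrt{\gap_k}\le\tfrac32\delta_k+\tfrac12\gap_k/\delta_k$, so \ref{thm:ratesgap1} upgrades the estimate to $\abs{\bregman_\xi(\signal_k,\signal^\ddag)}=\mathcal O(\delta_k+\gap_k/\delta_k)$.

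Apart from the $\gapset(\signal^\ddag)$‑membership argument, which is where weak lower semicontinuity, the parameter choice $\al_k\asymp\delta_k$ and the strict slack in the definition of $\eps$ all enter, the computation is the same Young‑plus‑triangle‑inequality bookkeeping as in Theorem~\ref{thm:rates}, now carried with the extra $\gap_k$ term; so the write‑up can legitimately say that the proof is similar to that of Theorem~\ref{thm:rates} and only spell out the new membership step and the $\sqrt{\delta_k^2+\gap_k}$ manipulation.
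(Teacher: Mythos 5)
Your proposal is correct and follows essentially the same route as the paper: the gap inequality $(\star)$, the Condition~\ref{cond:gap2} bound combined with the source representation $\xi=\Ao^*w$, and Young's inequality with $\al_k\asymp\delta_k$. The only differences are cosmetic --- you handle $\abs{\bregman_\xi}$ by separate upper and lower bounds rather than the paper's $\eta_k\in\set{0,2}$ device, and you spell out the membership $\signal_k\in\gapset(\signal^\ddag)$, a step the paper's proof uses implicitly.
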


\begin{proof}
By definition of $\bregman_\xi$,
\begin{linenomath*}
\begin{multline*}
    \abs{\bregman_\xi(\signal_k, \signal^\ddag)} = \reg(\signal_k) - \reg(\signal^\ddag) - \innerprod{r, \signal_k - \signal^\ddag}  \\
    + \eta_k \left(\reg(\signal^\ddag) - \reg(\signal_k) + \innerprod{r, \signal_k - \signal^\ddag} \right),
\end{multline*}
\end{linenomath*}
with $\eta_k \in \set{0, 2}$. By \ref{cond:gap1}, \ref{cond:gap2} 
\begin{linenomath*}
\begin{equation*}
    \reg(\signal^\ddag) - \reg(\signal_k) + \innerprod{r, \signal_k - \signal^\ddag} \leq C_1 \norm{\Ao \signal_k - \Ao \signal^\ddag} \leq C_1 \left( \norm{\Ao \signal_k - \data_k} + \delta_k \right),
\end{equation*}
\end{linenomath*}
By definition of the Tikhonov gap $\gap_k$, 
\begin{linenomath*}
\begin{equation*}
    \frac{1}{2} \norm{\Ao \signal_k - \data_k}^2 + \al_k \reg(\signal_k) - \al_k \reg(\signal^\ddag)
    \leq \frac{1}{2} \norm{\Ao \signal^\ddag - \data_k}^2 + \gap_k \leq \delta_k^2/2 + \gap_k.
\end{equation*}
\end{linenomath*}
Using once again \ref{cond:gap1} it follows that
\begin{linenomath*}
\begin{align*}
    &\frac{1}{2} \norm{\Ao \signal_k - \data_k}^2 + \al_k \abs{\bregman_\xi(\signal_k, \signal^\ddag)}
    \\ &\leq
    \gap_k + \delta_k^2/2 + \Tilde{C} \al_k \left( \norm{\Ao \signal_k - \data_k} + \delta_k \right) \\
    &\leq \gap_k +  \delta_k^2/2 + C_2 (\al_k \delta_k + \al_k^2) +  \norm{\Ao \signal_k - \data_k}^2/4\,,
\end{align*}
\end{linenomath*}
where the last inequality follows again by Young's product-inequality. With $\al_k \asymp \delta_k$ we get \ref{thm:ratesgap1}, \ref{thm:ratesgap2}.
\end{proof}

Clearly, if $\gap_k \leq \delta_k^2$ we obtain the classical convergence rates. Note, however, that $\gap_k$ depends on $\signal^\ddag$ and hence controlling $\gap_k$ is challenging. We next  discuss a special case where such an assumption is achievable.  

Assume that we can construct $\signal_k$ as a near-minimizer,  
\begin{linenomath*}
\begin{equation}\label{eq:near}
\tik_k(\signal_k) \leq \inf_z \tik_k(z)  + \al_k \eta_k \,.
\end{equation}
\end{linenomath*}
Since $(\eta_k)_k$ is bounded,  \cite{obmann2022convergence} shows that  $(\signal_k)_k$ has a weakly convergent subsequence. After restriction to such a convergent subsequence and denoting its limit by $\signal^\ddag$ we get    $\varepsilon_k(\signal^\ddag)
    \leq \al_k \eta_k $.
With  $\al_k \asymp \delta_k$, according to Theorem~\ref{thm:ratesgap} we get the rates $\norm{\Ao \signal_k - \data_k}^2 = \mathcal{O}(\delta_k^2 + \delta_k \eta_k)$ and $\abs{\bregman_\xi(\signal_k, \signal^\ddag)} = \mathcal{O}(\delta_k + \eta_k)$.

\subsection{Iterative minimization}

In the following we assume that near minimization \eqref{eq:near} of $\tik_k$ is realized with some iterative algorithm.

\begin{corollary}[Iterative minimization] \label{cor:iterative}
Let $\mathcal{A}_k$ be an iterative algorithm for minimizing $\tik_k$ such that for the $n$-th iterate $\signal_{k,n}$  we have  $\tik_k(\signal_{k,n}) \leq \inf  \tik_k + f_k(n)$  with $f_k(n) \to 0$ as $n \to \infty$.  Let $\signal_{k, n(k)}$ satisfy  \eqref{eq:near}. Then
\begin{enumerate}[label =(\arabic*), topsep=0.5em, itemsep=0em]
    \item $\norm{\Ao \signal_{k,n(k)} - \data_k} = \mathcal{O}\Bigl(\sqrt{\delta_k^2 + \delta_k \eta_k}\Bigr)$
    \item $\bregman_G(\signal_{k,n(k)}, \signal^\ddag) = \mathcal{O}(\delta_k + \eta_k)$.
\end{enumerate}
If $\eta_k \to 0$ then $(\signal_{k, n(k)})_k$ converges to an $\reg$-minimizing solution of $\Ao \signal = \data$ in the Bregman-distance.
\end{corollary}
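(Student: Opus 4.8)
The plan is to deduce the corollary from Theorem~\ref{thm:ratesgap} together with the near-minimizer discussion preceding it, plus a classical limiting argument for the last assertion. First I would construct the index $n(k)$: since $f_k(n)\to 0$ as $n\to\infty$, for each $k$ there is $n(k)$ with $f_k(n(k))\le \al_k\eta_k$, so that $\signal_{k,n(k)}$ satisfies the near-minimization bound \eqref{eq:near}. Because $(\eta_k)_k$ is bounded, the results of \cite{obmann2022convergence} yield a weakly convergent subsequence of $(\signal_{k,n(k)})_k$; after passing to it and renaming, let $\signal^\ddag$ be its weak limit. The key observation is that \eqref{eq:near}, being a bound over all competitors, controls the Tikhonov gap at $\signal^\ddag$: since $\tik_k(\signal^\ddag)\ge\inf_z\tik_k(z)$, we get $\gap_k=\gap_k(\signal^\ddag)=\tik_k(\signal_{k,n(k)})-\tik_k(\signal^\ddag)\le\al_k\eta_k$, hence $\sup_k\gap_k/\al_k\le\sup_k\eta_k<\infty$ and the radius $\eps$ in \ref{cond:gap2} may be chosen as required, so Condition~\ref{cond:rates-gap} is applicable.

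Next I would invoke Theorem~\ref{thm:ratesgap} with $\signal_k$ replaced by $\signal_{k,n(k)}$ and with the gradient selection chosen so that $\xi=G(\signal^\ddag)$ is the element furnished by \ref{cond:gap1} (possible by Condition~\ref{cond:conv}\ref{cond:conv2}), so that $\bregman_\xi(\cdot,\signal^\ddag)=\bregman_G(\cdot,\signal^\ddag)$. Substituting $\gap_k\le\al_k\eta_k$ and $\al_k\asymp\delta_k$ into the two conclusions of Theorem~\ref{thm:ratesgap} turns them into $\norm{\Ao\signal_{k,n(k)}-\data_k}=\mathcal{O}(\sqrt{\delta_k^2+\al_k\eta_k})=\mathcal{O}(\sqrt{\delta_k^2+\delta_k\eta_k})$ and $\abs{\bregman_G(\signal_{k,n(k)},\signal^\ddag)}=\mathcal{O}(\delta_k+\gap_k/\delta_k)=\mathcal{O}(\delta_k+\eta_k)$, which are exactly claims (1) and (2).

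For the case $\eta_k\to 0$: claim (2) gives $\bregman_G(\signal_{k,n(k)},\signal^\ddag)\to 0$, so it remains to identify $\signal^\ddag$ as an $\reg$-minimizing solution of $\Ao\signal=\data$. From (1) and $\delta_k,\eta_k\to 0$ we have $\Ao\signal_{k,n(k)}\to\data$, and weak-to-weak continuity of $\Ao$ with $\signal_{k,n(k)}\rightharpoonup\signal^\ddag$ gives $\Ao\signal^\ddag=\data$. For minimality, fix any solution $\signal$ of $\Ao\signal=\data$; applying \eqref{eq:near} to $\signal$ and using $\norm{\data-\data_k}\le\delta_k$ together with nonnegativity of the data term yields $\al_k\reg(\signal_{k,n(k)})\le \tfrac12\delta_k^2+\al_k\reg(\signal)+\al_k\eta_k$, i.e. $\reg(\signal_{k,n(k)})\le \tfrac{\delta_k^2}{2\al_k}+\reg(\signal)+\eta_k$; since $\delta_k^2/\al_k\asymp\delta_k\to 0$ we get $\limsup_k\reg(\signal_{k,n(k)})\le\reg(\signal)$, and weak lower semicontinuity (Condition~\ref{cond:conv}\ref{cond:conv1}) gives $\reg(\signal^\ddag)\le\liminf_k\reg(\signal_{k,n(k)})\le\reg(\signal)$. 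Thus $\signal^\ddag$ is $\reg$-minimizing and $(\signal_{k,n(k)})_k$ converges to it in the Bregman distance.

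I expect no step to be genuinely deep: the essential subtlety is the bookkeeping that the gap feeding Theorem~\ref{thm:ratesgap} is the gap at the (unknown) limit $\signal^\ddag$, and that \eqref{eq:near} automatically dominates it by $\al_k\eta_k$ — this is what renders $\gap_k$, otherwise hard to control, harmless here. The only ingredient beyond Theorem~\ref{thm:ratesgap} is the standard $\limsup$/lower-semicontinuity argument for the $\reg$-minimizing property when $\eta_k\to 0$, and a small remark matching $\bregman_G$ with $\bregman_\xi$ for the distinguished subgradient $\xi$.
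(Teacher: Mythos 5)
Your proposal is correct and follows essentially the same route as the paper: the paper's proof simply notes that the rates follow from Theorem~\ref{thm:ratesgap} because $\signal_{k,n(k)}$ is an $\al_k\eta_k$-minimizer (so that $\gap_k\le\al_k\eta_k$, exactly your key observation), and defers the $\reg$-minimizing property to the standard argument in \cite{scherzer2009variational}, which is the $\limsup$/lower-semicontinuity argument you spell out. Your write-up merely fills in the details the paper leaves implicit, including the harmless identification of $\bregman_G$ with $\bregman_\xi$.
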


\begin{proof}
The rates follow from Theorem~\ref{thm:ratesgap} and because $\signal_{k, n(k)}$ is an $\al_k \eta_k$-minimizer. That $\signal^\ddag$ is an $\reg$-minimizing solution of $\Ao \signal = \data$ is shown similar to \cite{scherzer2009variational}.
\end{proof}

An important feature of  Corollary~\ref{cor:iterative} is that the points $\signal_{k, n(k)}$ can be obtained in a finite number of steps of the algorithm $\mathcal{A}_k$ and still result in convergence rates of order  $\mathcal{O}(\delta_k )$. Opposed to this, classical theory needs access to global minimizers which usually requires an infinite number of steps.

\begin{remark}[Convex regularizers]
The assumption of having access to an algorithm $\mathcal{A}_k$ is applicable if $\reg$ is  convex. In this case $\tik_k$  is convex and algorithms such as subgradient descent, heavy ball methods or accelerated gradient methods guarantee convergence  \cite{ghadimi2015global, nesterov1983method}. 
For such  algorithms  the number  of iterations in dependence of $\delta_k$ can be stated. Assume  $\eta_k = \delta_k$ and that the algorithm $\mathcal{A}_k$ is  convergnt with a rate of $f(n) = C n^{-\beta}$ for some $\beta > 0$. According to Corollary~\ref{cor:iterative}, we need to perform on the order of $(\alpha_k \delta_k)^{-1/\beta}$ to guarantee convergence to an $\reg$-minimizing solution. With the choice $\alpha_k \asymp \delta_k$ this yields that we need to perform on the order of $\delta_k^{-2 / \beta}$ iterations. For example, if $\beta = 1$, e.g. in the case of the heavy ball method \cite{ghadimi2015global}, we need on the order of $\delta_k^{-2}$ number of iterations which is comparable to the number of iterations for the Landweber iteration  \cite{EngHanNeu96}.
\end{remark}

Theorem~\ref{thm:ratesgap}  suggests that slower rates might occur if $\tik_k$ is not sufficiently small.    
Indeed, it is easy to construct examples where this is the case and convergence in the Bregman-distance is considerably slower than the rate $\mathcal{O}(\delta)$; compare  Example~\ref{ex:rates}.
As a consequence, this means that depending on the algorithm $\mathcal{A}_k$, its initialization and the choice of hyper-parameters that in some cases the required  number of iterations necessary is strict.

\begin{remark}[Comparison of inexactness results] \label{ref:comparison}
Finally, we briefly compare the results of Theorem~\ref{thm:ratesinexact} and Theorem~\ref{thm:ratesgap}. Both Theorems deal with the case of inexactness in the construction of the regularized solution, but they rely on different measures for  inexactness. As such these Theorems might be more suitable in  different situations. Assume  that in both cases  regularized solutions are constructed using an iterative algorithm.   The main advantage of Theorem~\ref{thm:ratesinexact} is that the condition $\norm{z_k} \leq \al_k \eta_k$ for some user-defined $\eta_k > 0$ is easily checkable  in an online manner during the the algorithm itself and no prior  information other than the tolerance $\eta_k$ is necessary. However, it may not be known a-priori how long this might take and the number of iterations could significantly increase depending on $\eta_k$. Opposed to this, Corollary~\ref{cor:iterative} gives an estimate of the number of iterations necessary. 
\end{remark}

\section{Numerical example} \label{sec:numerics}

We perform a simple test to numerically check the convergence rates derived in the previous sections in the context of iterative minimization. 

\subsection{Setting}

As inverse  problem  we adapt the ``Depth Profiling and Depth Resolution'' as presented in \cite[Section 7.8]{hansen2010discrete}. We take  $\X = \Y = L^2(0, \pi / 2)$ and the linear operator $\Ao \colon \X \to \Y$ defined by
\begin{linenomath*}
\begin{equation*}
    (\Ao \signal)(s) = \int_0^{\arcsin(\cos(s))} \exp(-\sin(\tau)) \cos(\tau) \signal(\tau) \mathrm{d}\tau,
\end{equation*}
\end{linenomath*}
for $\signal \in \X$ and $s \in (0, \pi / 2)$.

We consider the quadratic regularizer  $\reg = 1/2 \norm{\cdot}^2$ in which case according to Corollary~\ref{cor:convex}, Condition~\ref{cond:rate} holds true whenever the source condition \ref{cond:rate1} is satisfied. 
We construct near minimizers using gradient descent. As theoretical framework for the convergence rates we use Theorem~\ref{thm:ratesinexact} for which conditions on gradients can be checked during iteration; compare Remark~\ref{ref:comparison}.  The source condition is satisfied, whenever $\signal^\ddag \in \ran(\Ao^*)$. 

\subsection{Implementation details}

For the presented results we choose the true signal $\signal^\ddag = \Ao^* w$ with $w(s) = \cos(10 s) + \sin(5 s^2)$ which satisfies the source  condition by definition. 

We simulate noisy data $\datadelta$ by adding white noise for different noise levels $\delta \in \set{10^{-k} \colon k = 2, \dots, 7}$ to $\data = \Ao \signal^\ddag$.  We  choose the regularization parameter $\al_k = \delta_k$ and consider the Tikhonov functional $\tik_k =  \norm{\Ao (\cdot) - \data_k}^2/2 + \al_k  \norm{\cdot}^2/2$. We choose the tolerance level $\eta_k = \al_k^\beta$ with $\beta = 0.1$. Hence we stop the gradient descent iteration once we have $\norm{\nabla \tik_k(\signal_k)} \leq \delta_k^{1 + \beta}$. It should be noted, that since we stop gradient descent before convergence, the resulting $\signal_k$ depends on the initial value $\signal_0$ and hence we test for different choices of $\signal_0$ namely the constant $0$ and constant $1$ functions. The code for the numerical simulations is publicly available at \url{https://git.uibk.ac.at/c7021101/cpr-rates}.

\subsection{Results}

Numerical results are shown in Table~\ref{tab:rates} where the difference $\norm{\signal_k - \signal^\ddag}^2$ is given in dependence of $\delta$ for the two different initial values $\signal_0$. One notices that the convergence rate obtained with $\signal_0 = 0$ is way better than the one given in Theorem~\ref{thm:ratesinexact} and is closer to $\delta$ than $\delta^\beta$. The estimated rate is around $\beta_{\rm est}=0.99$. On the other hand, the convergence rate for the initial value $\signal_0 = 1$ is closer to $\delta^\beta$. The estimated rate in this case is $\beta_{\rm est}=0.22$. This shows that depending on the input-parameters of the algorithm the convergence rate obtained can significantly differ. This also indicates that without further assumptions better rates than the one in Theorem~\ref{thm:ratesinexact} can be expected.

\begin{table}
    \centering
    \begin{tabular}{c|c|c|c}
         \toprule
         $\delta$ & $\delta^\beta$ & $\signal_0 = 1$ & $\signal_0 = 0$ \\
         \midrule
         $10^{-2}$ & $6 \cdot 10^{-1}$ & $2 \cdot 10^{-1}$ & $1 \cdot 10^{-2}$ \\
         $10^{-4}$ & $4 \cdot 10^{-1}$ & $6 \cdot 10^{-2}$ & $2 \cdot 10^{-4}$ \\
         $10^{-6}$ & $3 \cdot 10^{-1}$ & $2 \cdot 10^{-2}$ & $3 \cdot 10^{-6}$ \\
         \bottomrule
    \end{tabular}
    \caption{Error $\norm{\signal_k - \signal^\ddag}^2$ in for different noise levels using gradient descent for two different initial values.}
    \label{tab:rates}
\end{table}

\section{Conclusion} \label{sec:conclusion}

In this paper, we have presented convergence rates in the absolute symmetric Bregman distance for the regularization of critical points under a classical source condition and an assumption on the nonconvexity of the regularizer $\reg$. This result has been generalized to inexact critical points, where the inexactness is measured in the magnitude of the gradient of the Tikhonov functional. Making the additional assumption that almost-minimizers can be achieved, we derived convergence rates in the absolute Bregman distance. A direct consequence is that, in contrast to the classical theory, access to global minimizers is not necessary for regularization, while known rates of $\mathcal{O}(\delta)$ are preserved in the absolute Bregman distance. We have also shown that near-minimizers on the order of $\delta^{-2 / \beta}$ iterations can be achieved using an iterative algorithm with rate $n^{-\beta}$.

We finally presented numerical simulations showing that non-exactness of the critical points can indeed lead to different convergence rates depending on the input parameters of the algorithm. Corollary~\ref{prop:necessity} suggests Morozov's discrepancy principle for choosing the regularization parameters, and establishing conditions for when this leads to a convergent regularization method is an interesting line of future research. Other directions of future work could focus more on the practical aspect of minimization and to derive conditions under which rates can be improved under an inexactness assumption.


\end{document}